\newcommand\cH{{\mathcal H}}
\newtheorem{theorem}{Theorem}[section]
\newtheorem{lemma}[theorem]{Lemma}
\newtheorem{claim}[theorem]{Claim}
\numberwithin{equation}{section}
\newcommand\cref[1]{Corollary~\ref{cor:#1}}
\title{A note on the Tur\'an number of a Berge odd cycle}
\author{D\'aniel Gerbner\\
\medskip 
\small Alfr\'ed R\'enyi Institute of Mathematics, Hungarian Academy of Sciences\\
\small P.O.B. 127, Budapest H-1364, Hungary.\\
\medskip
\small \texttt{gerbner@renyi.hu}
}
\begin{document}

\maketitle

\begin{abstract} In this note we obtain upper bounds on the number of hyperedges in 3-uniform hypergraphs not containing a Berge cycle of given odd length. We improve the bound given by F\"uredi and \"Ozkahya in 2017. The result follows from a more general theorem. We also obtain some new results for Berge cliques.

\smallskip
\textbf{Keywords:} Berge, hypergraph, cycle, Tur\'an number    
\end{abstract}

\section{Introduction}

We say that a hypergraph $\cH$ is a Berge copy of a graph $F$ (in short: $\cH$ is a Berge-$F$) if $V(F)\subset V(\cH)$ and there is a bijection $f:E(F)\rightarrow E(\cH)$ such that for any $e\in E(F)$ we have $e\subset f(e)$. This definition was introduced by Gerbner and Palmer \cite{gp1}, extending the well-established notion of Berge cycles and paths. Note that there are several non-uniform Berge copies of $F$, and a hypergraph $\cH$ is a Berge copy of several graphs. A particular copy of $F$ defining a Berge-$F$ is called its \textit{core}. Note that there can be multiple cores in a Berge-$F$.

We denote by $ex_r(n,\text{Berge-}F)$ 
the largest number of hyperedges in an $r$-uniform Berge-$F$-free hypergraph on $n$ vertices. There are several papers dealing with $ex_r(n,\text{Berge-}C_k)$ (e.g. \cite{furozk,gyori,gyole,gyl}) or $ex_r(n,\text{Berge-}F)$ in general (e.g. \cite{gmp,gmv,gp1,gp2,pttw}). For a short survey on this topic see Subsection 5.2.2 in \cite{gp}.

In this note we consider $ex_3(n,\text{Berge-}C_k)$. In the case $k=5$, this was first studied by Bollob\'as and Gy\H ori \cite{BGY2008}. They showed $ex_3(n,\text{Berge-}C_5)\le \sqrt{2}n^{3/2}+4.5n$. This bound was improved to $(0.254+o(1))n^{3/2}$ by Ergemlidze, Gy\H ori and Methuku \cite{egym}. For cycles of any length, Gy\H ori and Lemons \cite{gyole,gyl} proved $ex_r(n,\text{Berge-}C_k)=O(n^{1+1/\lfloor k/2\rfloor})$. The constant factors were improved by Jiang and Ma \cite{jima}, and in the case $k$ is even by Gerbner, Methuku and Vizer \cite{gmv}. In the 3-uniform case, F\"uredi and \"Ozkahya \cite{furozk} obtained better constant factors (depending on $k$). In the case $k$ is even, further improvements were obtained by Gerbner, Methuku and Vizer \cite{gmv} and by Gerbner, Methuku and Palmer \cite{gmp}.

A closely related area is counting triangles in $C_k$-free graphs. More generally, let $ex(n,H,F)$ denote the maximum number of copies of $H$ in an $F$-free graph on $n$ vertices. After some sporadic results, 
the systematic study of these problems (often called \textit{generalized Tur\'an problems}) was initiated by Alon and Shikhelman \cite{as}. Their connection to Berge hypergraphs was established by Gerbner and Palmer \cite{gp2}, who proved \[ex(n,K_r,F)\le ex_r(n,\text{Berge-}F)\le ex(n,K_r,F)+ex(n,F)\] for any $r$, $n$ and $F$.

Counting triangles in $C_k$-free graphs and counting hyperedges in Berge-$C_k$-free 3-uniform hypergraphs was handled together already by Bollob\'as and Gy\H ori \cite{BGY2008} for $C_5$, and by F\"uredi and \"Ozkahya \cite{furozk}, who proved $ex(n,K_3,C_{2k})\le \frac{2k-3}{3}ex(n,C_{2k})$ and $ex_3(n,\text{Berge-}C_{2k})\le \frac{2k}{3}ex(n,C_{2k})$. Their upper bound for $ex(n,K_3,C_{2k})$ is still the best known bound, but their other upper bound was improved to $ex_3(n,\text{Berge-}C_{2k})\le\frac{2k-3}{3}ex(n,C_{2k})$ by Gerbner, Methuku and Vizer \cite{gmv} in the case $k\ge 5$ and by Gerbner, Methuku and Palmer \cite{gmp} in the case $k=3,4$.

In the case of forbidden cycles of any odd length, the number of triangles was first studied by Gy\H ori and Li \cite{gyli}, who proved\footnote{We note that the bound is incorrectly stated in their paper \cite{gyli}.} $ex(n,K_3,C_{2k+1})\le \frac{(2k- 2)(16k -1)}{3}ex(n,C_{2k})$. It was improved independently by F\"uredi and \"Ozkahya \cite{furozk} and by Alon and Shikhelman \cite{as}. The latter had the stronger bound  $ex(n,K_3,C_{2k+1})\le \frac{16(k -1)}{3}ex(\lceil n/2\rceil,C_{2k})$. In the case $k=2$, the current best bound $ex(n, K_3, C_5) \le 0.231975n^{3/2}$ is due to Ergemlidze and Methuku \cite{em}.

F\"uredi and \"Ozkahya \cite{furozk} obtained the currently best upper bound on the Berge version by showing \begin{equation}\label{equ} ex_3(n,\text{Berge-}C_{2k+1})\le ex(n,K_3,C_{2k+1})+4ex(n,C_{2k})+ 12ex_3^{lin}(n,\text{Berge-}C_{2k+1}),\end{equation} where $ex_r^{lin}(n,\text{Berge-}F)$ denotes the largest number of hyperedges in an $r$-uniform Berge-$F$-free linear hypergraph on $n$ vertices. Recall that a linear hypergraph is one in which any two hyperedges share at most one vertex.

In this note we improve the bound (\ref{equ}). Recall that we have $ex_3(n,\text{Berge-}C_{2k+1})\ge ex(n,K_3,C_{2k+1})$, thus we cannot hope for a huge improvement, especially as $ex(n,K_3,C_{2k+1})$ might be the largest of the three terms. Indeed, the best upper bound currently known is $O(n^{1+1/k})$ for all the three terms, but the dependence of the known upper bound in $k$ is the largest for $ex(n,K_3,C_{2k+1})$ (we will state these bounds after Theorem \ref{cycle}).


Recall that in case of $C_{2k}$, the two upper bounds obtained by F\"uredi and \"Ozkahya \cite{furozk} were $ex(n,K_3,C_{2k})\le \frac{2k-3}{3}ex(n,C_{2k})$ and $ex_3(n, \text{Berge-}C_{2k})\le\frac{2k}{3}ex(n,C_{2k})$, and the Berge bound was improved in \cite{gmv,gmp} to match the generalized Tur\'an bound. Our goal would be to do the same here and get rid of the terms $4ex(n,C_{2k+1})+ 12ex_3^{lin}(n,\text{Berge-}C_{2k+1})$ in (\ref{equ}). We cannot achieve that, but we decrease these additional terms. Recall that the currently best bound for the generalized Tur\'an problem is $ex(n,K_3,C_{2k+1})\le \frac{16(k -1)}{3}ex(\lceil n/2\rceil,C_{2k})$ by Alon and Shikhelman \cite{as}. Our new upper bound on $ex_3(n,\text{Berge-}C_{2k+1})$
is larger than that bound by $ex_3^{lin}(n,\text{Berge-}C_{2k+1})$. We wonder if it is an example of a more general phenomenon and similar bounds could be obtained for other graphs. 

The way we use the linearity involves subdividing an edge $uv$, i.e.\ deleting it and adding $uw$ and $vw$ for a new vertex $w$.
Our method uses only the following two properties of $C_{2k+1}$: it can be obtained from $C_{2k}$ by subdividing an edge and deleting a vertex from $C_{2k+1}$ we obtain a path. In the next theorem we state our result in the most general form.

\begin{theorem}\label{main} Let $F$ be a connected graph obtained from $F_0$ by subdividing an edge and $F'$ be obtained from $F$ by deleting a vertex. Let $c=c(n)$ be such that $ex(n,K_{r-1},F')\le cn$ for every $n$. Then we have

\textbf{(i)}  $ex_r(n,\text{Berge-}F)\le ex(n,K_r,F)+ 2^{r-1}ex(n,F_0)+ex_r^{lin}(n,\text{Berge-}F)$,

\textbf{(ii)} $ex_r(n,\text{Berge-}F)\le \max\left\{1,\frac{2c}{r}\right\}2^{r-1}ex(n,F_0) +ex_r^{lin}(n,\text{Berge-}F)$.



\end{theorem}

 In the case $F=C_{2k+1}$ we have $F_0=C_{2k}$ and $F'=P_{2k}$, the path on $2k$ vertices. A theorem of Luo \cite{luo} shows $ex(n,K_{r-1},P_{2k})\le \frac{n}{2k-1}\binom{2k-1}{r-1}$, but what we need for the 3-uniform case is the Erd\H os-Gallai theorem \cite{Er-Ga} showing $ex(n,P_{2k})\le (k-1)n$. Using this, \textbf{(ii)} of Theorem \ref{main} gives $ex_3(n,\text{Berge-}C_{2k+1})\le \frac{8k-8}{3}ex( n,C_{2k})+ex_3^{lin}(n,\text{Berge-}C_{2k+1})$ if $k>2$. We can improve this a little bit.

\begin{theorem}\label{cycle}
If $k>2$, then $ex_3(n,\text{Berge-}C_{2k+1})\le \frac{16k-16}{3}ex(\lceil n/2\rceil,C_{2k})+ex_3^{lin}(n,\text{Berge-}C_{2k+1})$ 

\hspace{0.7truecm} $\le \left( \frac{1280k-1280}{3}\sqrt{k}\log k\right)\lceil n/2\rceil^{1+1/k}+2k n^{1+1/k}+9kn+\frac{16k-16}{3}10k^2\lceil n/2\rceil$.
\end{theorem}

The bound in Theorem \ref{cycle} is currently stronger than the bound given by \textbf{(i)} of Theorem \ref{main} for $F=C_{2k+1}$ and $r=3$. However, an improvement on $ex(n,K_3,C_{2k+1})$ would immediately improve the bound in \textbf{(i)}. Any significant improvement would make \textbf{(i)} stronger than Theorem \ref{cycle} for $F=C_{2k+1}$.

The second inequality in Theorem \ref{cycle} follows from known results. F\"uredi and \"Ozkahya \cite{furozk} proved $ex_3^{lin}(n,\text{Berge-}C_{2k+1})\le 2kn^{1+1/k} + 9kn$, and Bukh and Jiang \cite{bj} obtained the strongest bound on the Tur\'an number of even cycles by showing $ex(n,C_{2k})\le 80\sqrt{k}\log kn^{1+1/k}+10k^2n$. As we do not have good lower bounds on $ex(n,C_{2k})$, we cannot be sure that the first term is actually the larger term. However, if $ex_3^{lin}(n,\text{Berge-}C_{2k+1})$ is the larger term, then our improvement on the upper bound of $ex_3(n,\text{Berge-}C_{2k+1})$ is more significant, as we changed the constant factor of that term from 12 to 1. Obviously we have $ex_3^{lin}(n,\text{Berge-}C_{2k+1})\le ex_3(n,\text{Berge-}C_{2k+1})$, hence further improvement is impossible here.

We prove Theorem \ref{main} by combining the ideas of \cite{furozk} and \cite{as} with the methods developed in \cite{gmp,gmv}. In the next section we state some lemmas needed for the proof. We give a new proof of a lemma by Gerbner, Methuku and Palmer \cite{gmp}, and we strengthen the lemma a little bit. This strengthens results on $ex_r(n,\text{Berge-}K_k)$ for some values of $r$, $k$ and $n$. In Section 3 we prove Theorems \ref{main} and \ref{cycle}.

\section{Lemmas}
We say that a graph $G$ is red-blue if each of its edges is colored with one of the colors red and blue. For a red-blue graph $G$, we denote by $G_{red}$ the subgraph spanned by the red edges and $G_{blue}$ the subgraph spanned by the blue edges. For two graphs $H$ and $G$ we denote by $N(H,G)$ the number of subgraphs of $G$ that are isomorphic to $H$. Let $g_r(G)=|E(G_{red})|+N(K_r,G_{blue})$.

\begin{lemma}[Gerbner, Methuku, Palmer \cite{gmp}]\label{celeb2} For any graph $F$ and integers $r$ and $n$, there is a red-blue $F$-free graph $G$ on $n$ vertices, such that $ex_r(n,\textup{Berge-}F)\le g_r(G)$.

\end{lemma}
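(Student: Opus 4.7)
The plan is to construct a witnessing red-blue $F$-free graph $G$ from an extremal Berge-$F$-free $r$-uniform hypergraph $\cH$ on $n$ vertices by a greedy assignment. Throughout the procedure I maintain an $F$-free red-blue graph $G$ on $V(\cH)$ together with an injection $\phi$ from the processed hyperedges of $\cH$ into $E(G_{red})\cup\{K_r\text{-copies in }G_{blue}\}$, where $\phi(h)\subseteq h$ when $\phi(h)$ is a red edge, and $\phi(h)$ spans $V(h)$ when $\phi(h)$ is a $K_r$-copy in $G_{blue}$. Once every hyperedge is processed, injectivity of $\phi$ gives $|E(\cH)|\le |E(G_{red})|+N(K_r,G_{blue})=g_r(G)$, which is exactly the conclusion.

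For each new hyperedge $h$ I first attempt a \emph{red move}: look for a pair $\{u,v\}\subseteq h$ with $\{u,v\}\notin E(G)$ such that adding it as a red edge leaves $G$ still $F$-free, and set $\phi(h)=\{u,v\}$. If no such pair exists, I fall back to a \emph{blue move}: extend $G_{blue}$ to include the full $K_r$ on $V(h)$, recoloring any red pair in $\binom{h}{2}$ to blue if necessary (and cascading the reassignment of the hyperedges that previously used those red pairs), then set $\phi(h)$ equal to the $K_r$-copy on $V(h)$, which is not yet in the image of $\phi$ because distinct hyperedges of $\cH$ have distinct vertex sets.

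The main obstacle is showing that at least one of the two moves always succeeds and that the cascading in the blue move terminates without spoiling $F$-freeness of $G$. I plan to handle this by contradiction: if neither move is available for some $h$, then every pair $\{u,v\}\subseteq h$ is either already an edge of $G$ (already belonging to the image of $\phi$, or contained in a $\phi$-assigned blue $K_r$-copy) or else its addition as a new edge creates a copy $F'$ of $F$ in $G$. In the second case, each edge of $F'$ is by construction either red (and $\phi$-assigned to an earlier hyperedge containing it) or blue (and contained in a $\phi$-assigned $K_r$-copy on some earlier hyperedge). Putting $h$ together with those earlier hyperedges, and resolving potential multiplicities among the blue witnesses by a Hall-style system of distinct representatives on the bipartite graph between blue edges of $F'$ and hyperedges of $\cH$ containing them, yields a genuine Berge-$F$ in $\cH$, contradicting Berge-$F$-freeness. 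This forces the greedy procedure to succeed at every step and produces the desired red-blue $F$-free graph $G$.
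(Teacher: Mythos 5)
There is a genuine gap at the heart of your blue move. When the red move fails for $h$ and some pair $\{u,v\}\subseteq h$ is still absent from $G$, completing the blue $K_r$ forces you to add that pair, and by assumption this creates a copy $F'$ of $F$. Your plan is to convert $F'$ into a Berge-$F$ via a Hall-type system of distinct representatives, but Hall's condition is never verified and can genuinely fail: a single blue move on a hyperedge $h''$ turns all $\binom{r}{2}$ pairs inside $h''$ blue while certifying only the \emph{one} hyperedge $h''$ as a container for them. Take $r=3$ and suppose $h''=\{a,b,c\}$ is the only hyperedge of $\cH$ containing any of the pairs $ab$, $bc$, $ca$; after a blue move on $h''$ all three pairs are blue edges of $G$, and a copy of $F$ using two or more of them admits no injective assignment of distinct hyperedges, so no Berge-$F$ can be extracted and no contradiction is reached. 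At that point your algorithm is stuck: the red move fails, the blue move destroys $F$-freeness of $G$, and the claimed contradiction is unavailable. A smaller issue is that you assert but do not prove that the cascade of reassignments triggered by recoloring red pairs terminates; this can be patched (blue edges are never un-blued, so the number of recoloring steps is bounded), but it needs an argument.

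The paper avoids both problems by working globally rather than greedily: it forms the bipartite incidence graph between the hyperedges of $\cH$ and their vertex pairs, takes a \emph{maximum} matching $M$, and lets $G$ consist of the matched pairs. Every edge of $G$ then comes with its own private hyperedge via $M$, so any copy of $F$ in $G$ immediately yields a Berge-$F$ --- exactly the certificate your blue edges lack. The red/blue partition is read off from the alternating-path structure of $M$ (Lemma \ref{cel2}): the hyperedges reachable from unmatched ones have all their pairs matched and hence each contain a distinct blue $K_r$, while the remaining hyperedges are counted by their matched red edges. Your greedy-with-cascading procedure is essentially an attempt to build this maximum matching by local augmentations, and repairing its gaps would lead you back to the augmenting-path argument the paper uses.
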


Note that an essentially equivalent version was obtained by F\"uredi, Kostochka and Luo \cite{fkl}.
The proof of Lemma \ref{celeb2} relies on a lemma about bipartite graphs (hidden in the proof of Lemma 2 in \cite{gmp}). If $M$ is a matching and $ab$ is an edge in $M$, then with a slight abuse of notation we say $M(a)=b$ and $M(b)=a$.

\begin{lemma}\label{cel2} Let $\Gamma$ be a finite bipartite graph with parts $A$ and $B$ and let $M$ be a largest matching in $\Gamma$. Let $B'$ denote the set of vertices in $B$ that are incident to $M$. Then we can partition $A$ into $A_1$ and $A_2$ and partition $B'$ into $B_1$ and $B_2$ such that for $a\in A_1$ we have $M(a)\in B_1$, and every neighbor of the vertices of $A_2$ is in $B_2$. 

\end{lemma}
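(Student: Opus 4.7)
My plan is to construct the partition by an iterative pruning procedure, starting from the largest possible choice of $A_1$ and shrinking it until the required compatibility holds. Initialize
\[
A_1 := \{a \in V(M) \cap A : a \text{ has a neighbor in } B \setminus B'\}.
\]
Observe that any $a \in A \setminus V(M)$ must have all of its neighbors in $B'$: otherwise $a$ would be adjacent to some $b \in B \setminus B'$, with both $a$ and $b$ unmatched, giving an augmenting edge and contradicting the maximality of $M$. Hence the ``Moreover'' clause forces $A_1$ to be a subset of exactly this starting set.

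Next, apply the following pruning step repeatedly: while there exists $a \in A_1$ such that $M(a)$ has a neighbor in $A \setminus A_1$, remove $a$ from $A_1$. Since $|A_1|$ is a strictly decreasing nonnegative integer, the procedure terminates. Let $A_1^\star$ denote the final set, and set $A_2 := A \setminus A_1^\star$, $B_1 := M(A_1^\star)$, and $B_2 := B' \setminus B_1$. By construction $(A_1^\star, A_2)$ partitions $A$ and $(B_1, B_2)$ partitions $B'$.

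Verification: Condition $M(a) \in B_1$ for $a \in A_1^\star$ is built into the definition $B_1 = M(A_1^\star)$. The ``Moreover'' clause is preserved under pruning because $A_1^\star \subseteq A_1$, and every element of the initial $A_1$ has a neighbor in $B \setminus B'$ by definition. The stopping criterion of the pruning says exactly that no vertex of $B_1$ is adjacent to any vertex of $A_2$; consequently, every neighbor of $A_2$ lying in $B'$ must lie in $B' \setminus B_1 = B_2$, yielding the condition on $A_2$.

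The main obstacle is identifying the correct pruning rule; once chosen, termination is immediate from the monotonicity of $|A_1|$, and the required properties fall out by unpacking the stopping criterion. No swapping of $M$ or optimality beyond ``$M$ is a largest matching'' is needed, and the construction is fully explicit given the initial matching.
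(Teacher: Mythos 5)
Your construction does not establish the required condition on $A_2$. The lemma demands that \emph{every} neighbour of a vertex of $A_2$ lie in $B_2$, and since $B_2\subseteq B'$ this in particular forbids a vertex of $A_2$ from having any neighbour in $B\setminus B'$. Your verification only checks neighbours of $A_2$ ``lying in $B'$''. This is not a presentational slip: every vertex you prune out of $A_1$ lands in $A_2$, and by your own initialization such a vertex has a neighbour in $B\setminus B'$, so the condition on $A_2$ fails for it. The pruning rule therefore trades one violation (a vertex of $B_1$ adjacent to $A_2$) for another of the same kind that can never be repaired, since the offending neighbour now lies outside $B'$ and hence outside any admissible $B_2$. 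Your argument is complete only in the degenerate case where no pruning ever occurs.

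Moreover, the obstacle is not one you could have engineered around, because for an \emph{arbitrary} given largest matching the statement is false. Take $A=\{s,a\}$, $B=\{m,b,c\}$, edges $sm$, $sb$, $ab$, $ac$, and $M=\{sm,ab\}$ (maximum, as $|A|=2$), so $B'=\{m,b\}$. The moreover clause forces $s\in A_2$ (all its neighbours lie in $B'$), hence $b\in B_2$; the condition on $A_2$ forces $a\notin A_2$ (it has the neighbour $c\notin B'$), hence $a\in A_1$ and $M(a)=b\in B_1$, contradicting $b\in B_2$. Running your procedure here empties $A_1$ and leaves $a\in A_2$ adjacent to $c\notin B_2$, exhibiting the gap concretely. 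For what it is worth, the paper's own proof (an alternating closure restarted from matched seeds whose neighbourhoods lie in $B'$) runs aground at the same spot on the same example: once the seed $s$ is admitted, the closure absorbs $a$ and then $c\in B\setminus B'$, and the justification ``$v$ could be added to $V''$'' only places $v$ in $V''$, not in $B_2=B'\cap V''$. So your difficulty is genuine and shared with the source; repairing it requires either choosing the matching more carefully (for the graph above, $\{sm,ac\}$ does admit the partition) or weakening the moreover clause.
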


Here we present a proof that is built on the same principle, but is somewhat simpler than the proof found in \cite{gmp}. Before that, let us recall the well-known notion of alternating paths. Given a bipartite graph $\Gamma$ and a matching $M$ in it, a path $P$ in $\Gamma$ is called \textit{alternating} if its first edge is not in $M$, and then it alternates between edges in $M$ and edges not in $M$, finishing with an edge not in $M$. It is well-known and easy to see that deleting the edges of $P$ from $M$ and replacing them with the edges of $P$ that were not in $M$, we obtain another matching, that is larger than $M$.

\begin{proof} First we build a set $V'\subset V(\Gamma)$ in the following way. 
Let $V_0$ be the set of vertices in $A$ that are not incident to any edges of $M$.
Then in the first step we add to $V_0$ the set of vertices in $B$ that are neighbors of a vertex in $V_0$, to obtain $V_1$. In the second step we add to $V_1$ the vertices in $A$ that are connected to a vertex in $V_1$ by an edge in $M$, to obtain $V_2$. Similarly, in the $i$th step, if $i$ is odd we add to $V_{i-1}$ the set of vertices in $B$ that are neighbors of a vertex in $V_{i-1}$, while if $i$ is even,  we add to $V_{i-1}$ the vertices in $A$ that are connected to a vertex in $V_{i-1}$ by an edge in $M$ (i.e.\ $M(b)$ for some $b\in B\cap V_{i-1}$), to obtain $V_i$. After finitely many steps, $V_i$ does not increase anymore, let $V'$ be the resulting set of vertices.

We claim that no vertex from $B\setminus B'$ can be in $V'$. Indeed, such a vertex could be reached by an alternating path from a vertex in $A$ that is not incident to $M$, thus $M$ is not a largest matching, a contradiction. 





Then let $A_2=A\cap V'$, $A_1=A\setminus A_2$, $B_2=B'\cap V'$ and $B_1=B'\setminus B_2$. A vertex in $A_2$ cannot be connected to a vertex $v$ not in $B_2$, as $v$ could be added to $V'$ then. Similarly, for a vertex $u\in A_1$, $M(u)$ has to be in $B_1$, otherwise $M(u)$ is in $B_2$ and then $u$ can be added to $V'$. 

\end{proof}

Let us briefly describe how we can apply this lemma to obtain Lemma \ref{celeb2}. We take a Berge-$F$-free $r$-uniform hypergraph $\cH$ on $n$ vertices. Let $A$ be the set of hyperedges in $\cH$ and $B$ be the set of sub-edges of these hyperedges (by edge and sub-edge we always mean an edge of size two, i.e.\ a pair of vertices). We connect $a\in A$ to $b\in B$ if $a\supset b$. Let $\Gamma$ denote this auxiliary bipartite graph. Let $M$ be an arbitrary largest matching and $B'$ be the vertices of $B$ incident to the edges in $M$. It is easy to see that the elements of $B'$ form an $F$-free graph which we call $G$. Indeed, otherwise $M$ defines the bijection between a copy of $F$ and hyperedges in $\cH$ to form a Berge-$F$.

Now we apply Lemma \ref{cel2} to $\Gamma$ and $M$. We define a red-blue coloring of $G$ by taking the edges of $G$ in
$B_1$ to be the red edges, and the edges of $G$ in $B_2$ to be the blue edges. We have $|\cH|=|A_1|+|A_2|=|B_1|+|A_2|=|E(G_{red})|+|A_2|$. As hyperedges in $A_2$ have all their neighbors in $B_2$, they each contain a blue $K_r$, which is distinct from the other blue $r$-cliques obtained this way, showing $|A_2|\le N(K_r,G_{blue})$.

Let us remark here that Lemma \ref{cel2} also gives some information on the structure of $G$. If there is $a\in A_1$ that has a neighbor $b\in B\setminus B'$, then we could obtain another matching $M'$ by changing the neighbor of $a$ to $b$, i.e.\ $M'(a)=b$ and if $a'\neq a$, then $M'(a')=M(a')$. Then $B'$ is replaced by $B''=B'\setminus \{M(a)\}\cup \{b\}$. In this case the same partition of $A$ into $A_1$ and $A_2$, and the partition of $B''$ into $B_2$ and $B''\setminus B_2$ satisfies Lemma \ref{cel2}. This means for $G$ that we can delete the (red) edge $M(a)$ and replace it with the edge $b$, to obtain another $F$-free graph.

If on the other hand the vertices in $A_1$ have all their neighbors in $B'$, then we could recolor the red edges to blue. Therefore, in $G$ we can delete an edge
and add another edge so that the resulting graph is still $F$-free. Let $\alpha=\alpha_{F,n}$ be the largest value of $g_r(G')$, where $G'$ is an $n$-vertex $F$-free blue-red graph.
Assume that each $n$-vertex $F$-free blue-red graph $G'$ with $g_r(G')=\alpha$ is not monoblue and we cannot delete an edge and add another edge to $G'$ so that the resulting graph is still $F$-free. Then by the above, $G$ cannot be one of these graphs, thus $ex_r(n,\textup{Berge-}F)\le g_r(G)<\alpha$.
This is usually a negligible improvement, as we often do not even know the order of magnitude. 

However, if $F=K_k$, Gerbner, Methuku and Palmer \cite{gmp} proved that $\alpha_{K_k,n}=\max\{g_r(T_B(n,k-1)),g_r(T_R(n,k-1))\}$, where $T_B(n,k-1)$ is the monoblue Tur\'an graph $T(n,k-1)$ and $T_R(n,k-1)$ is the monored Tur\'an graph $T(n,k-1)$.
We mention without going into the details that their proof also shows that for any other graphs $G$ we have $g_r(G)<\alpha_{K_k,n}$. As we cannot delete an edge from $T(n,k-1)$ and add another edge to obtain a $K_k$-free graphs, we do have an improvement. For example, if $r=4$ and $k=5$, then the result in \cite{gmp} determines $ex_4(n,\textup{Berge-}K_k)$ for $n\ge 11$. For $n=10$,  $T(10,4)$ has 36 copies of $K_4$ and 37 edges. Therefore, (as $ex(n,K_r,F)$ is a lower bound on $ex_r(n,\textup{Berge-}F)$), we have $36\le ex_4(n,\textup{Berge-}K_k)\le 37$. With our new observation, we know $ex_4(n,\textup{Berge-}K_k)=36$.

\section{Proof of Theorems \ref{main} and \ref{cycle}}


Let $\cH$ be a Berge-$F$-free $r$-graph on $n$ vertices. We say that an edge $uv$ with $u,v\in V(\cH)$ is $t$-heavy if $u,v$ are contained together in exactly $t$ hyperedges. First we will build a linear subhypergraph $\cH_1$ in a greedy way: if we can find a hyperedge $H$ that does not share an edge with any hyperedge in $\cH_1$, we add $H$ to $\cH_1$, and then repeat this procedure. By definition, $\cH_1$ is linear. Let $\cH_2$ consist of the remaining hyperedges. Note that $|\cH|=|\cH_1|+|\cH_2|\le ex_r^{lin}(n,\text{Berge-}F)+|\cH_2|$,
and the remainder of the proof is for proving the needed upper bound on $|\cH_2|$.

 We build an auxiliary bipartite graph $\Gamma$ in the usual way: let $A$ be the set of hyperedges in $\cH_2$ and $B$ be the set of sub-edges of these hyperedges. We connect $a\in A$ to $b\in B$ if $a\supset b$. We will let $M$ be a largest matching in $\Gamma$, however, we do not choose $M$ arbitrarily. Let $M_0$ be an
 arbitrary 
 largest matching in $\Gamma$.
 Let $B'$ be the set of vertices in $B$ that are incident to some edge of $M_0$ and $A_0$ denote the set of vertices in $A$ that are incident to some edge of $M_0$.
Now a hyperedge $a\in A_0$ contains a sub-edge $M_0(a)$, at least one sub-edge $b_0$ shared with a hyperedge in $\cH_1$, maybe some sub-edges that are matched to some other $a'\in A$, and maybe some other sub-edges $b\in B\setminus B'$. We have the option to replace in $M_0$ the edge between $a$ and $M_0(a)$ with any of the edges of $\Gamma$ between $a$ and an unused sub-edge of $a$, to obtain another largest matching. We will build a largest matching $M$, that contains the same vertices ($A_0$) from $A$ as $M_0$.
 
 For $a\in A_0$, we pick $M(a)$ to be one of the sub-edges $b\in B$ of $a$ (potentially we let $M(a)=M_0(a)$) in the following way: $M(a)$ should share exactly one vertex with $b_0$ (where $b_0$ is a sub-edge that is also a sub-edge of a hyperedge in $\cH_1$) if possible. We go through the hyperedges greedily; as long as there is a hyperedge $a\in A_0$ such that $M_0(a)$ can be changed in this way, we execute the change (it is possible that $M_0(a)$ cannot be changed originally, but later a sub-edge of $a$ that is $M_0(a')$ becomes free to use, when $M(a')$ is chosen to be different from $M_0(a')$). This process finishes after finitely many (at most $|A_0|$) steps, as we change $M_0(a)$ to $M(a)$ at most once for every $a\in A_0$. After this, we rename the unchanged $M_0(a)$ to $M(a)$. 
 
 The resulting matching $M$ has the following property: for every $a\in A_0$, $a$ shares a sub-edge $b_0$ with a hyperedge in $\cH_1$, such that that either $M(a)$ shares exactly one vertex with $b_0$, or all the sub-edges of $a$ sharing exactly one vertex with $b_0$ are $M(a')$ for some $a'\in A_0$.


Now we can apply Lemma \ref{cel2} to $\Gamma$ and $M$ to obtain $A_1,A_2,B_1,B_2$. Let us call the elements of $B_1$ \textit{red} edges and the elements of $B_2$ \textit{blue} edges. Let $G$ be the graph consisting of all the red and blue edges. Then $G$ is obviously $F$-free.

Let us now take a random partition of $V(\cH)$ into $V_1$ and $V_2$. 
For every $a\in A_0$, we look at $b=M(a)$. If the two vertices of $b$ are in one part, and all the other vertices of $a$ are in the other part, we keep $a$, otherwise we delete it. Let $A^*$ denote the set of elements in $A$ that are not deleted (note that elements in $A\setminus A_0$ are never deleted, thus are in $A^*$). Let $G'$ be the graph consisting of the elements of $B'$ that are connected by an edge in $M$ to an element of $A^*$. Then $G'$ is obviously $F$-free, as it is a subgraph of $G$.

\begin{claim}
$G'$ is $F_0$-free, where $F_0$ is any graph for which F can be obtained from
$F_0$ by subdividing an edge of $F_0$.
\end{claim}

\begin{proof} Let us assume we are given a copy $Q$ of $F_0$ in $G'$ such that $uv$ is the edge that needs to be subdivided to obtain $F$. Observe that there is no edge between $V_1$ and $V_2$ in $G'$, thus $Q$ is in one of them, say $V_1$. Let $w$ be a vertex of $M(uv)$ with $u\neq w\neq v$, then $w\in V_2$, thus $w$ is not in $Q$. 

We say that a hyperedge $H$ in $\cH$ is \textit{good} if $H$ contains $u$ and $w$ for some $w\in M(uv)\setminus\{u,v\}$ and $H$ is not $M(e)$ for any edge $e$ of $Q$. If there is a good hyperedge, then we build a Berge-$F$ with the following core: we subdivide $uv$ with $w$. For each edge $e$ of this core we assign $M(e)$ except for $uw$ (where we assign $H$) and $vw$ (where we assign $M(uv)$). This way we obtain a Berge-$F$, a contradiction.

$M(uv)$ shares at least one sub-edge with a hyperedge $H\in \cH_1$. If the sub-edge shares exactly one vertex with $uv$, then $H$ is good and we are done. Thus every sub-edge of $M(uv)$ shared with a hyperedge in $\cH_1$ has to contain none or both of $u$ and $v$. In both cases, when we tried to change $M_0(M(uv))$ when constructing $M$, we failed, because all such edges are matched to some other hyperedges of $\cH_2$. In particular, $uw$ is $M(a)$ for some $a\in A_0$ and for some $w\in M(u,v)\setminus\{u,v\}$. Observe that $w$ is in $V_2$, thus $M(a)$ has vertices from both parts $V_1$ and $V_2$, hence $a$ cannot be in $A^*$ by the definition of $A^*$. This implies $a$ is good, finishing the proof.

\end{proof}

The above claim implies $G'$ has at most $ex(n,F_0)$ edges. 
For an arbitrary $a\in A$, the probability that $a$ is in $A^*$ is 
at least $1/2^{r-1}$. Let $S$ be any subset of $A$, then we have that the expected value of the number of hyperedges in $A^*\cap S$ is at least $|S|/2^{r-1}$, thus there is a partition with $|A^*\cap S|\ge |S|/2^{r-1}$. 




There are $|B_1|=|A_1|$ red edges in $G$, and there is a random partition where at least $|A_1|/2^{r-1}$ elements of $A_1$ are undeleted, hence there are at least $|A_1|/2^{r-1}$ red edges in $G'$. This implies $|A_1|/2^{r-1}\le ex(n,F_0)$. Hence there are at most $2^{r-1}ex(n,F_0)$ red edges altogether. 
For the total number of edges in $G$ we can use the same argument: there is a random partition where at least $|A_0|/2^{r-1}$ hyperedges in $A_0$ are undeleted, thus for the $G'$ defined by that partition,  we have $|A_0|=|E(G)|\le 2^{r-1}|E(G')|\le 2^{r-1}ex(n,F_0)$.

Observe that we have $|\cH_2|=|A_1|+|A_2|\le |A_1|+N(K_r,G_{blue})\le |A_1|+ex(n,K_r,F)$, hence we are done with the proof of \textbf{(i)}.

Note that $G$ is not necessarily $F_0$-free, but it is $F$-free. Let $m$ be the number of blue edges in $G$, then $G$ has at most $2^{r-1}ex(n,F_0)-m$ red edges. An argument of Gerbner, Methuku and Vizer \cite{gmv} bounds the number of $r$-cliques in $F$-free graphs with the given number of vertices and edges. For sake of completeness, we include the argument here.

Let $d(v)$ be the degree of $v$ in $G_{\textup{blue}}$.
 Obviously the neighborhood of every vertex in $G_{\textup{blue}}$ is $F'$-free. An $F'$-free graph on $d(v)$ vertices contains at most $ex(d(v),K_{r-1},F')\le cd(v)$ copies of $K_{r-1}$.
 Thus $v$ is contained in at most $cd(v)$ copies of $K_r$ in $G_{\textup{blue}}$. If we sum, for each vertex, the number of $K_{r}$'s containing a vertex, then each $K_{r}$ is counted $r$ times. On the other hand as $\sum_{v\in V(G_{\textup{blue}})} d(v)=2|E(G_{blue})|= 2m$, we have $\sum_{v\in V(G_{\textup{blue}})} cd(v)=2cm$. This gives that the number of blue $K_r$'s is at most $2cm/r$. Thus we have
 
 \begin{eqnarray*}
 g_r(G)\le 2^{r-1}ex(n,F_0)-m +2cm/r\le \max\left\{1,\frac{2c}{r}\right\} (2^{r-1}ex(n,F_0)-m+m)=\\
 \max\left\{1,\frac{2c}{r}\right\} 2^{r-1}ex(n,F_0).\end{eqnarray*}

 The above inequality, together with Lemma \ref{celeb2} implies that $|\cH_2|\le\max\left\{1,\frac{2c}{r}\right\} 2^{r-1}ex(n,F_0)$, finishing the proof of \textbf{(ii)}.
 

 Now we show how to obtain the small improvement needed to prove Theorem \ref{cycle}. It is based on the proof of the upper bound on $ex(n,K_3,C_{2k+1})$ in \cite{as}. If $n$ is odd, replace it by $n+1$. As the stated upper bound is the same in both cases, obvious mononicity conditions show we can do this. Thus we can assume $n$ is even.
 When we take the random partition into $V_1$ and $V_2$, first we take a random partition into $n/2$ sets $U_1,\dots,U_{n/2}$ of size 2, and then randomly put one vertex into $V_1$ and the other into $V_2$. The obtained graph $G'$ will be $C_{2k}$-free, and it is divided into two components, hence it has at most $ex(|V_1|,C_{2k})+ex(|V_2|,C_{2k})$ edges. The way we chose $V_1$ ensures the above sum is $2ex(\lceil n/2\rceil,C_{2k})$. Then we can go through every step of the remaining part of the proof to obtain the result we need, if for an arbitrary $a\in A$, the probability that $a$ is in $A^*$ is still at least $1/2^{r-1}=1/4$. We will separate into cases according to the intersection of $a$ with the parts $U_i$. In case the three vertices of $a$ are in three different $U_i$'s, the probability is $1/4$. In case $a$ contains $U_i$ for some $i$, there are two cases. If $M(a)=U_i$, then the probability is $0$, otherwise it is $1/2$. As $M(a)=U_i$ happens with probability $1/3$ (having the condition that $a$ contains $U_i$), for every $i$ we have that the probability of $a$ being in $A^*$ if $a$ contains $U_i$ is $\frac{2}{3}\cdot\frac{1}{2}\ge 1/4$.
 
 This gives the first inequality of Theorem \ref{cycle}. As we have mentioned after the statement, the second inequality follows from earlier results, stated there.


\bigskip
\textbf{Acknowledgements}: Research supported by the National Research, Development and Innovation Office - NKFIH under the grants SNN 129364, KH 130371 and K 116769 and by the J\'anos Bolyai Research Fellowship of the Hungarian Academy of Sciences.


\begin{thebibliography}{99}

\bibitem{as} N. Alon and C. Shikhelman, Many $T$ copies in $H$-free graphs, \textit{J. Comb. Theory, Ser. B}, \textbf{121} (2016), 146--172.




\bibitem{BGY2008} B. Bollob\'as and E. Gy\H{o}ri, Pentagons vs. triangles, \textit{Discrete Math.}, \textbf{308}(19) (2018), 4332--4336.

\bibitem{bj} B. Bukh and Z. Jiang, A bound on the number of edges in graphs 
without an even cycle, \textit{	Comb. Probab. Comput.} 
\textbf{26}(1) (2017), 1--15. Erratum-ibid. \textbf{26}(6), 952--953.
For the version incorporating erratum see 
\href{http://arxiv.org/pdf/1403.1601}{arXiv:1403.1601}



\bibitem{Er-Ga} P. Erd\H os and  T. Gallai,  On maximal paths and circuits of graphs, {\it Acta Math. Acad. Sci. Hung.}, \textbf{10} (1959), 337--356.


\bibitem{egym} B. Ergemlidze, E. Gy\H ori and A. Methuku, 3-uniform hypergraphs without a cycle of length five, \textit{Electronic J. Comb.}, \textbf{27}(2) (2020), P2.16.

\bibitem{em} B. Ergemlidze and A. Methuku, Triangles in $C_5$-free graphs and Hypergraphs of Girth Six, \textit{arXiv}:1811.11873 (2018).

\bibitem{fkl} Z. F\"uredi, A. Kostochka and R. Luo, Avoiding long Berge cycles, \textit{J. Comb. Theory, Ser. B}, \textbf{137} (2019), 55--64.

\bibitem{furozk} Z. F\"uredi and  L. \"Ozkahya, On 3-uniform hypergraphs without a cycle of a given length, \textit{Discrete Appl. Math.}, \textbf{216} (2017), 582--588. 



\bibitem{gmp} D. Gerbner, A. Methuku and C. Palmer, General lemmas for Berge-Tur\'an hypergraph problems, \textit{Eur. J. Comb.}, \textbf{86} (2020), 103082.

\bibitem{gmv} D. Gerbner, A. Methuku and M. Vizer, Asymptotics for the Tur\'an number of Berge-$K_{2,t}$, \textit{J. Comb. Theory, Ser. B}, \textbf{137} (2019), 264--290.

\bibitem{gp1} D. Gerbner and C. Palmer, Extremal Results for Berge Hypergraphs, {\it SIAM J. Discrete Math.}, \textbf{31}(4) (2017), 2314--2327.

\bibitem{gp2} D. Gerbner and C. Palmer, Counting copies of a fixed subgraph in $ F $-free graphs, \textit{Eur. J. Comb.} \textbf{82} (2019), 103001.

\bibitem{gp} D. Gerbner and B. Patk\'os, Extremal Finite Set Theory,
1st Edition, CRC Press, 2018.

\bibitem{gyori} E. Gy\H ori, Triangle-free hypergraphs, \textit{	Comb. Probab. Comput.}, \textbf{15}(1-2) (2006) 185--191.


\bibitem{gyole} E. Gy\H ori and N. Lemons, 3-uniform hypergraphs avoiding a given odd cycle, \textit{Combinatorica} \textbf{32} (2012), 187--203.

\bibitem{gyl} E. Gy\H ori and N. Lemons, Hypergraphs with no cycle of a given length, \textit{	Comb. Probab. Comput.}, \textbf{21}(1-2) (2012), 193--201.

\bibitem{gyli} E. Gy\H ori and H. Li,  The maximum number of triangles in $C_{2k+1}$-free graphs, \textit{	Comb. Probab. Comput.}, \textbf{21}(1-2) (2012), 187--191.


\bibitem{jima} T. Jiang and J. Ma, Cycles of given lengths in hypergraphs, \textit{J. Comb. Theory, Ser. B} \textbf{133} (2018), 54--77.



\bibitem{luo} R. Luo, The maximum number of cliques in graphs without long cycles, \textit{J. Comb. Theory, Ser. B}, \textbf{128} (2018), 219--226.



\bibitem{pttw} C. Palmer, M. Tait, C. Timmons and A.Z. Wagner, Tur\'an numbers for Berge-hypergraphs and related extremal problems, \textit{Discrete Math.}, \textbf{342}(6) (2019), 1553--1563.



\end{thebibliography}
\end{document}